\documentclass[a4paper,reqno]{amsart}
 \usepackage{amssymb,amsmath,amscd,graphicx,color,epstopdf,mathtools}
\usepackage[all]{xy}
\usepackage[all]{xy}
\usepackage{color}
\usepackage{hyperref}
\usepackage{enumerate}
\usepackage{amsthm}

\usepackage{epsfig}
\usepackage[english]{babel}
 

\setlength{\marginparwidth}{1.9cm}
\let\oldmarginpar\marginpar
\renewcommand\marginpar[1]
{\oldmarginpar{\tiny\bf \begin{flushleft} #1 \end{flushleft}}}



   \renewcommand{\a}{\alpha}

  \newcommand{\w}{\omega}
  
  
   \newcommand{\R}{\mathbb{R}}
   \newcommand{\C}{\mathbb{C}}


    \newcommand{\cL}{\mathcal{L}}

%

\newcommand{\std}{\mathrm{std}}
\renewcommand{\gg}{\mathfrak{g}}        
\renewcommand{\aa}{\mathfrak{a}} 
\newcommand{\pp}{\mathfrak{p}}          

\newtheorem{theorem}{Theorem}[section] 
\newtheorem{lemma}[theorem]{Lemma}     

\newtheorem{proposition}[theorem]{Proposition}
\newtheorem{remark}[theorem]{Remark}

%

\begin{document}

\author{David Mart\'inez Torres}

\address{PUC-Rio de Janeiro\\
Departamento de Matem\'atica \\ Rua Marqu\^es de S\~ao Vicente, 225\\
G\'avea - 22451-900, Rio de Janeiro, Brazil }

\email{dfmtorres@gmail.com}

\title{Semisimple coadjoint orbits and cotangent bundles}
\begin{abstract} 
Semisimple (co)adjoint orbits through real hyperbolic elements are well-known to be symplectomorphic to cotangent bundles.
We provide a new proof of this fact based on elementary results on both Lie theory 
and symplectic geometry.  Our proof establishes a new connection between the Iwasawa horospherical projection and the symplectic geometry of real hyperbolic (co)adjoint orbits.
\end{abstract}

\maketitle

\section{Introduction} 

The coadjoint representation has Poisson nature:  the Lie bracket of a Lie algebra $\gg$ canonically induces a linear Poisson
bracket on its dual $\gg^*$. The symplectic leaves of the linear Poisson structure  are the coadjoint orbits.
The induced symplectic structure on a coadjoint orbit is the so called Kostant-Kirillov-Souriau (KKS) symplectic structure.

For any Poisson structure the understanding of the 
symplectic structure of any of it leaves is fundamental; for duals of  Lie algebras this understanding is even more important, 
for it has deep implications on Hamiltonian group actions and representation theory \cite{LG,Ki}.

When $\gg$ is  of compact type, coadjoint orbits --which are the classical complex flag manifolds--
are compact, and therefore more tools are available for the study of their
symplectic geometry \cite{LG}.

Global aspects of the symplectic geometry of non-compact coadjoint orbits are much harder to grasp. The first results --motivated by the 
orbit method-- prove that coadjoint orbits of nilpotent groups and of groups of exponential type are
symplectomorphic to symplectic vector spaces \cite{Ve,Pu,Pe}. For complex semisimple groups, for which the Killing form intertwines
adjoint and coadjoint actions --so one can speak of the KKS symplectic structure of an adjoint orbit-- Lisiecki proved the following:

\begin{theorem}[\cite{Li}]\label{thm:main0} Let $G$ be a connected, complex semisimple Lie group and let $\gg$ denote its Lie algebra. Let $X\in \gg$ be
a semisimple element and let $P\subset G$ be its normalizer. 

There is a canonical symplectomorphism  between the adjoint orbit 
$\mathrm{Ad}(G)_X\subset \gg$
with its holomorphic KKS symplectic structure,
and the cotangent bundle of the  flag manifold $G/P$ with its standard Liouville symplectic structure twisted
by the pull back of a closed holomorphic 2-form on the flag manifold.
\end{theorem}
It should be noted that the twisting form is never zero, so no semisimple coadjoint orbit is a holomorphic cotangent bundle.

Instances of Theorem \ref{thm:main0} had been rediscovered a number of times, mostly in the context of non-compact real semisimple Lie groups, and using
different techniques. 

Arnold \cite{Ar} states Theorem \ref{thm:main0}  
for regular semisimple orbits of $\mathrm{SL}(n+1,\C)$. He also observes that if $\mathrm{SL}(n+1,\C)$ is regarded
as a real Lie  group --so orbits are now regarded as symplectic manifolds with the real part of the holomorphic KKS form-- 
then real hyperbolic orbits  (those having real eigenvalues) are canonically symplectomorphic to cotangent bundles. 

More generally, let $G$ be a connected, non-compact semisimple Lie group with finite center. 
Once an Iwasawa decomposition  $G=KAN$  has been fixed, then real hyperbolic
elements are those conjugated to elements in the closure of  
the fixed positive Weyl chamber $\mathrm{Cl}(\aa^+)\subset \aa$ of the Lie
algebra of $A$. If $H\in \gg$ is real hyperbolic, then \cite{Bi,ABB,GGS} provide three different  proofs of the existence of a symplectomorphism:
\begin{equation}\label{eq:isom}
(\mathrm{Ad}(G)_H,\w_{KKS})\cong (T^*\mathrm{Ad}(K)_H,\w_\std),
\end{equation}
where $\w_\std$ is the standard Liouville symplectic form on the cotangent bundle of the real flag manifold.

%

The symplectomorphism in \ref{eq:isom} is an instance of Theorem \ref{thm:main0}, as it follows by applying an involution on the complex 
Lie algebra to pass to the appropriate real form, and observing that the real flag manifold is a Lagrangian section. However,
from the point of view of symplectic geometry  the case of real hyperbolic orbits is by far the most interesting, as it exhibits
a cotangent bundle with no twisting. 

The proofs of Theorem \ref{thm:main0} and its version for real hyperbolic orbits are far from elementary. 
They rely  on  deep results on classification of complete Lagrangian fibrations with contractible fibers and algebraic geometry of
coadjoint orbits \cite{Li}, integrable systems  \cite{ABB}, 
 hyperKahler reduction\cite{Bi},
or integration of Lie algebra actions \cite{GGS}. 

The purpose of this note is to construct the symplectomorphism in \ref{eq:isom} using just elementary facts on both Lie theory 
and symplectic geometry, thus shedding some new light on the symplectic geometry of semisimple (co)adjoint orbits.

The key ingredients in our strategy are the full use of the \emph{canonical ruling} of the adjoint orbit and the description of new aspects
the \emph{symplectic
geometry of the Iwasawa projections}.
In what follows we briefly discuss the main ideas behind our approach, and compare it with  \cite{Li,ABB,Bi,GGS}:

We assume without loss of generality that $H\in \mathrm{Cl}(\aa^+)$.
The Iwasawa decomposition defines a well-known canonical ruling on the adjoint orbit $\mathrm{Ad}(G)_H$. The ruling, together
with the Killing form,  determine a diffeomorphism
\begin{equation}\label{eq:emb}i\colon T^*\mathrm{Ad}(K)_H\rightarrow \mathrm{Ad}(G)_H
 \end{equation}
 extending the inclusion of real flag manifold $\mathrm{Ad}(K)_H\hookrightarrow\mathrm{Ad}(G)_H$.
 
Of course, the diffeomorphism (\ref{eq:emb})  appears in \cite{Li,ABB,GGS}, but it is not fully exploited. 
\begin{itemize}
 \item In \cite{ABB} non-trivial theory
of complete Lagrangian fibrations is used to construct a symplectomorphism 
$\varphi\colon (T^*\mathrm{Ad}(K)_H,\w_{\mathrm{std}})\rightarrow (\mathrm{Ad}(G)_H,\w_{\mathrm{KKS}})$, with the property 
of being the unique symplectomorphism
which (i) extends the identity on  $\mathrm{Ad}(K)_H$ and (ii) is a morphism  fiber bundles, where $\mathrm{Ad}(G)_H$
has the bundle structure induced by $i$ in (\ref{eq:emb}). 

It can be checked that $\varphi$ as constructed in \cite{ABB} coincides with $i$ in  (\ref{eq:emb});
the uniqueness statement is a consequence of the absence of non-trivial symplectic automorphisms of the cotangent bundle
preserving the zero section and the fiber bundle structure.
\item In \cite{GGS} a complete Hamiltonian action of $\gg$ on 
$(T^*\mathrm{Ad}(K)_H,\w_{\mathrm{std}})$ is built. The momentum map 
$\mu\colon (T^*\mathrm{Ad}(K)_H,\w_{\mathrm{std}})\rightarrow (\mathrm{Ad}(G)_H,\w_{\mathrm{KKS}})$ is the desired
symplectomorphism;  the authors also show that the momentum map $\mu$  matches  $i$ in (\ref{eq:emb}).
\item The first step in  \cite{Li} is  a classification of (holomorphic) complete Lagrangian fibrations with contractible fiber. Next, symplectic induction
is used to produce for each semisimple element $X$ one such fibration, endowed with a Hamiltonian $G$-action of, thus mapping 
into the coadjoint orbit through $X$.
Then algebraic geometric results are used to show that the $G$-action is transitive, so the momentum map is in fact a symplectic biholomorphism. 
\end{itemize}
Summarizing, the proofs in \cite{Li,ABB,GGS} amount to global constructions on  non-compact symplectic manifolds,  something which always presents
technical difficulties (in the holomorphic setting of \cite{Li} the technicalities involve algebraic geometry). \footnote{In \cite{Ki}, Corollary 1, a proof of the isomorphism of a regular coadjoint orbit 
with the cotangent bundle 
is presented,
but it is not correct as the completeness issues are entirely ignored.}

The approach in \cite{Bi} is entirely different: the moduli space of solutions to Nahm's equations is symplectically identified on the
one hand with an adjoint orbit, and on the other hand with the cotangent bundle of its real flag manifold (and the
final symplectomorphisms is by no means explicit).

In this note we will use a simpler and more direct approach. We shall take full advantage of the ruling structure on real hyperbolic orbits 
to prove the equality:
\begin{equation}\label{eq:rul}
 \w_{\mathrm{std}}=i_*\w_{\mathrm{KKS}}.
 \end{equation}
 In fact, this is the approach  sketched by Arnold \cite{Ar}.
 
We shall omit the diffeomorphism $i$ in (\ref{eq:emb}) in the notation whenever there is no risk of confusion, so we consider
$\w_{\mathrm{std}},\w_{\mathrm{KKS}}$ as
two symplectic forms on $\mathrm{Ad}(G)_H$ whose equality we will check using the following strategy:

 Basic symplectic linear algebra \cite{MS} implies that to prove (\ref{eq:rul}) at $x\in \mathrm{Ad}(G)_H$
 it is enough to find $\cL_v,\cL_h\subset T_x\mathrm{Ad}(G)_H$ such that:
 \begin{enumerate}
\item [(i)] $\cL_v,\cL_h$  are  Lagrangian subspaces for $\w_{KKS}$ and $\w_{\mathrm{std}}$ respectively;
 \item[(ii)]  $\cL_v\cap \cL_h=\{0\}$; 
 \item[(iii)] $\w_{\mathrm{std}}(x)(Y,Z)=\w_{\mathrm{KKS}}(x)(Y,Z),\,\, \forall\, Y\in \cL_v,\,Z\in \cL_h$.
\end{enumerate}
As the notation suggests, $\cL_v$ will be the vertical tangent space coming from the fiber bundle structure defined by the ruling. This
vector subspace is trivially Lagrangian for $\w_{\mathrm{std}}$ and easily
seen to be Lagrangian for $\w_{\mathrm{KKS}}$ \cite{ABB}.

Let $\mathrm{Ad}(K)_H^g$ denote the image of $\mathrm{Ad}(K)_H$ by $\mathrm{Ad}(g)$. For any $x\in \mathrm{Ad}(G)_H$ 
transitivity of the adjoint action implies
the existence of $g\in G$ so that $x\in\mathrm{Ad}(K)_H^g.$
The `horizontal' subspace $\cL_h$ at $x$ will be the tangent space to $\mathrm{Ad}(K)_H^g$.
Because the zero section $\mathrm{Ad}(K)_H$ is Lagrangian
w.r.t. $\w_{\mathrm{KKS}}$ \cite{ABB},
 $G$-invariance of $\w_{\mathrm{KKS}}$ implies that $\cL_h$  is 
a Lagrangian subspace w.r.t $\w_{\mathrm{KKS}}$. If $\mathrm{Ad}(K)_H^g$
it is to be Lagrangian w.r.t to $\w_{\mathrm{std}}$, it should correspond to a closed 1-form in
$\mathrm{Ad}(K)_H$. In fact, it will be the graph of an exact 1-form, and the projections associated to the Iwasawa decomposition
will play a crucial role to determine a potential.

The horospherical projection $H\colon G\colon \rightarrow \aa$ is defined by $x\in K\mathrm{exp}H(x)N$. A pair $H\in \aa$, $g\in G$ 
 determines a function:
\[F_{g,H}\colon K\rightarrow \R,\,\, k\mapsto \langle H, H(gk)\rangle.\] Under the assumption $H\in \mathrm{Cl}(\aa^+)$
the function descends to the real flag 
manifold $\mathrm{Ad}(K)_H\cong K/Z_K(H)$, where $Z_K(H)$ is the centralizer of $H$ in $K$ \cite{DKV}. The functions $F_{g,H}$ 
are well-studied, and they play a prominent
role in Harmonic analysis and convexity theory \cite{DKV,H,B,BB}.

Our main technical result establishes the following relation between the horospherical projection and the KKS symplectic structure of real hyperbolic orbits:
\begin{proposition}\label{pro:pro}
  Let $G$ be a connected, non-compact semisimple Lie group with finite center, let $G=KAN$ be any fixed Iwasawa decomposition and let
  $H\in \mathrm{Cl}(\aa^+)$. Then for any $g\in G$ the submanifold \[\mathrm{Ad}(K)_H^g\subset \mathrm{Ad}(G)_H\overset{i^{-1}}{\cong}T^*\mathrm{Ad}(K)_H\]
  is the graph of the exterior differential of  $-\Theta_{g^{-1}}^*F_{g,H}\in C^\infty(\mathrm{Ad}(K)_H)$,
  where $\Theta_g$ is the diffeomorphism  on $\mathrm{Ad}(K)_H$  induced by the action of $g$ on $\mathrm{Ad}(G)_H$ (which preserves the ruling).
 \end{proposition}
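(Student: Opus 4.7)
The plan is to compute both $\alpha_g$ (the 1-form whose graph under $i$ equals $\mathrm{Ad}(K)_H^g$) and $-dF_{g,H}$ explicitly via the Iwasawa decomposition, and then match them by $\mathrm{Ad}$-invariance of the Killing form. First I would verify that $\mathrm{Ad}(K)_H^g$ is a section of the bundle $\pi\colon\mathrm{Ad}(G)_H\to\mathrm{Ad}(K)_H$ induced by $i$: given $x'=\mathrm{Ad}(k_1)H$ in the base, the Iwasawa equation $gk_0\in k_1 AN$ has a unique solution $k_0=\kappa(g^{-1}k_1)\in K$, and writing $gk_0=k_1\tilde a\tilde n$, the section value over $x'$ is $\mathrm{Ad}(gk_0)H=\mathrm{Ad}(k_1)\mathrm{Ad}(\tilde n)H$. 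Via the Killing-form identification of the vertical tangent $\mathrm{Ad}(k_1)[\mathfrak{n},H]$ with $T^*_{x'}\mathrm{Ad}(K)_H$ built into $i$, this section then translates into an explicit 1-form $\alpha_g$ whose value at $x'$ is a Killing-form pairing recording the vertical displacement from $\mathrm{Ad}(k_1)H$ to $\mathrm{Ad}(k_1)\mathrm{Ad}(\tilde n)H$.

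Next I would compute $dF_{g,H}$ directly. For $X\in\kk$, differentiating $F_{g,H}(k\exp(tX))=\langle H,H(gk\exp(tX))\rangle$ at $t=0$ reduces to finding the derivative of the Iwasawa $\aa$-projection along $gk(t)=gk\exp(tX)$. Differentiating the factorization $gk(t)=\kappa(t)\exp(H(t))n(t)$ at $t=0$ and solving for $\dot H(0)$ using the splitting $\gg=\kk\oplus\aa\oplus\mathfrak{n}$ shows that $\dot H(0)$ is the $\aa$-component of $\mathrm{Ad}(a(gk)n(gk))X$; since $H\in\aa$ is Killing-orthogonal to both $\kk$ and $\mathfrak{n}$, pairing with $H$ reads off this $\aa$-component unchanged, and $\mathrm{Ad}$-invariance together with $\mathrm{Ad}(a^{-1})H=H$ yields
\[
dF_{g,H}\big|_{k}(X)\;=\;\langle\mathrm{Ad}(n(gk)^{-1})H,\,X\rangle.
\]
A short check then verifies descent to $\mathrm{Ad}(K)_H=K/Z_K(H)$: for $Z\in\mathfrak{n}$, each term in the expansion $\mathrm{Ad}(\exp Z)H-H=\sum_{j\ge 1}\frac{1}{j!}(\mathrm{ad}\,Z)^jH$ lies in restricted root spaces $\mathfrak{g}_\beta$ with $\beta(H)\neq 0$ (by an induction using additivity of $\beta\mapsto\beta(H)$), and such spaces are Killing-orthogonal to $\mathfrak{z}_{\kk}(H)$.

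Comparing the two Killing-form expressions produces the claimed equality $\alpha_g=-dF_{g,H}$. The main obstacle will be aligning the base points: $\alpha_g(x')$ is naturally expressed at $x'=\mathrm{Ad}(k_1)H$ while $dF_{g,H}|_k$ is most easily computed at $k=k_0=\kappa(g^{-1}k_1)$, and the finite vertical displacement $\mathrm{Ad}(\tilde n)H-H$ has to be reconciled with the infinitesimal pairing $\langle\mathrm{Ad}(n(gk_0)^{-1})H,\,\cdot\rangle$. Expanding $\mathrm{Ad}(\exp\log\tilde n)H-H$ past its linear term in $\log\tilde n$, the higher-order contributions either vanish by orthogonality $\langle H,\kk\rangle=0$ or are absorbed by the $Z_K(H)$-descent; the residual pairing matches $\alpha_g$ up to the sign dictated by the convention for $i$, completing the argument.
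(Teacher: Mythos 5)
Your overall strategy coincides with the paper's: realize $\mathrm{Ad}(K)_H^g$ as a section of the ruling via the Iwasawa $K$-projection, convert the fibre displacement into a $1$-form through the Killing pairing, compute $dF_{g,H}$ by differentiating the Iwasawa factorization, and compare. Your two individual computations are essentially right: the formula $dF_{g,H}|_k(X)=\langle \mathrm{Ad}(N(gk)^{-1})H,X\rangle$ is a correct rederivation of the derivative formula the paper imports from \cite{DKV}, and your descent argument is fine. There is one small slip: since $gk_0=k_1\tilde a\tilde n$, the fibre value over $\mathrm{Ad}(k_1)H$ is $\mathrm{Ad}(k_1)\mathrm{Ad}(\tilde a\tilde n)H$, and $\mathrm{Ad}(\tilde a)$ cannot be dropped, as it does not fix $\mathrm{Ad}(\tilde n)H$.

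The genuine gap is the final comparison, which is the heart of the proof. You evaluate $\alpha_g$ over the base point $\mathrm{Ad}(k_1)H$, where it pairs tangent vectors against $\mathrm{Ad}(\tilde a\tilde n)H-H$ with $\tilde n=N(gk_0)$ and $k_0=K(g^{-1}k_1)$, but you propose to compute $dF_{g,H}$ at $k_0$. The differential of $F_{g,H}$ relevant at the base point $\mathrm{Ad}(k_1)H$ is the one at $k_1$, which by your own formula involves $N(gk_1)$ --- a different group element from $N(gk_0)$. The identity you actually need is
\[\langle \mathrm{Ad}(A(gk_0)N(gk_0))H+\mathrm{Ad}(N(gk_1)^{-1})H,\,X\rangle=0\quad\text{for all }X\in\kk,\]
a nontrivial relation between the Iwasawa data of $gk_0$ and of $gk_1$, and your proposed reconciliation --- Taylor-expanding $\mathrm{Ad}(\exp\log\tilde n)H-H$ and discarding higher-order terms by orthogonality or by descent --- does not address it: every term $(\mathrm{ad}\,Z)^jH$ with $j\ge1$ lies in $\mathfrak{n}(H)$, which pairs \emph{non-degenerately} with $\kk/\mathfrak{z}_K(H)$ (this non-degeneracy is what defines $i$ in the first place), so none of these terms vanishes against general $X\in\kk$. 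The paper sidesteps the mismatch entirely by parametrizing by the fibre point $H^{gk}$: Lemma \ref{lem:pro} places it over $H^{K(gk)}$ with downstairs tangent direction $K(X,gk)$; the decomposition $X^{AN(gk)}=K(X,gk)+A(X,gk)^{A(gk)}+N(X,gk)^{AN(gk)}$ together with $\langle\aa,\kk\rangle=\langle\aa,\mathfrak{n}\rangle=\langle\mathfrak{n},\mathfrak{n}\rangle=0$ reduces the Killing pairing to $-\langle H,A(X,gk)\rangle$; and the chain rule (\cite{DKV}, Cor.\ 5.2) identifies $\langle H,A(X,gk)\rangle$ as exactly $DF_{g,H}$ at that same point in that same direction. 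Reorganizing your argument along these lines closes the gap, with no finite-versus-infinitesimal analysis needed.
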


Proposition \ref{pro:pro} completes the description of the `horizontal' Lagrangians. The equality 
\[\w_{\mathrm{std}}(x)(Y,Z)=\w_{\mathrm{KKS}}(x)(Y,Z),\,\, \forall\, Y\in \cL_v,\,Z\in \cL_h\]
will follow from  computations analogous to those used to establish proposition \ref{pro:pro},  
 this providing a proof of the symplectomorphism \ref{eq:isom} which only appeals to basic symplectic geometry and Lie theory.

\section{Proof of the symplectomorphism \ref{eq:isom}}
In this section we fill in the details of the proof of the symplectomorphism \ref{eq:isom} sketched in the introduction.

Let us fix a Cartan decomposition $G=KP$ associated
to an involution $\theta$, and let $\mathfrak{k},\mathfrak{p}$ denote the respective Lie algebras. 
A choice maximal Abelian subalgebra $\aa\subset \pp$ and positive Weyl chamber $\mathfrak{a}^+\subset \aa$ (or root ordering)
gives rise to an Iwasawa decomposition $G=KAN$, with $\mathfrak{n}$ the Lie algebra of the nilpotent factor. 

In what follows   $X^g$ will denote the image of $X\in \mathfrak{g}$ by $\mathrm{Ad}(g)$.

We may pick without any loss of generality  $H\in \mathrm{Cl}(\aa^+)$ and consider the corresponding adjoint orbit $\mathrm{Ad}(G)_H$.
The orbit is identified
with the homogeneous space $G/Z(H)$, where $Z(H)$ denotes the centralizer of $H$. Under this identification $\mathrm{Ad}(K)_H$
is mapped to a submanifold canonically isomorphic to $K/Z_K(H)$, where $Z_K(H)=K\cap Z(H)$ is the centralizer of $H$ in $K$.
At the infinitesimal level,
the tangent of the real flag manifold  $T_H\mathrm{Ad}(K)_H$ is identified
with the quotient space $\mathfrak{k}/\mathfrak{z}_K(H)$, where $\mathfrak{z}_K(H)$ is the Lie algebra of $Z_K(H)$.

\begin{remark} When
using the quotient description of the coadjoint orbit and the real flag manifold, we
shall abuse notation and use representatives to denote the corresponding classes of points and tangent vectors, if
there is no risk of confusion.
\end{remark}

\subsection{The ruling and the identification $T^*\mathrm{Ad}(K)_H\overset{i}{\cong}\mathrm{Ad}(G)_H$.}
The contents we sketch in this subsection are rather standard. We refer the reader to \cite{ABB} for a  thorough exposition.

Let $\mathfrak{n}(H)$ be the sum of root subspaces associated to positive roots not vanishing on $H$. We have
the $\langle \cdot,-\theta\cdot \rangle$-orthogonal decomposition:
\[\mathfrak{g}=\theta\mathfrak{n}(H)\oplus \mathfrak{z}(H)\oplus \mathfrak{n}(H)\]

The affine subspace $H+\mathfrak{n}(H)$ is tangent to $\mathrm{Ad}(G)_H$ and complementary to $\mathrm{Ad}(K)_H$ at $H$.
Even more, the adjoint action of the subgroup
$N(H)$ integrating the nilpotent Lie algebra $\mathfrak{n}(H)$ maps $N(H)$ diffeomorphically
into $H+\mathfrak{n}(H)\subset \mathrm{Ad}(G)_H$.
This induces the well-known ruling of $\mathrm{Ad}(G)_H$ associated to the fixed Iwasawa decomposition.

As any ruled manifold, $\mathrm{Ad}(G)_H$ becomes an affine bundle. Since $\mathrm{Ad}(K)_H$ is transverse to the affine fibers,
the structure can be reduced to that of a vector bundle with zero section $\mathrm{Ad}(K)_H$. As to which vector bundle this is,
a vector tangent to the fiber over $H$ belongs to 
$\mathfrak{n}(H)$. The map $X\mapsto X+\theta X$ is a monomorphism from $\mathfrak{n}$ to $\mathfrak{k}$. Since
the image of $\mathfrak{n}(H)$ has 
trivial intersection with $\mathfrak{z}_K(H)$,  it is isomorphic to $\mathfrak{k}/\mathfrak{z}_K(H)\cong T_H\mathrm{Ad}(K)_H$. Therefore the pairing
$\langle\cdot,\cdot\rangle\colon \mathfrak{n}(H)\times \mathfrak{k}/\mathfrak{z}_K(H)\rightarrow \mathbb{R}$ --which is  well defined--
is also non-degenerate, and this provides the
canonical identification of the fiber at $H$ with $T^*_H\mathrm{Ad}(K)_H$. Since the Killing form and Lie bracket are $\mathrm{Ad}$-invariant, 
for any $k\in K$ we have the analogous statement for
\[\langle\cdot,\cdot\rangle\colon \mathfrak{n}(H^k)\times \mathfrak{k}/\mathfrak{z}_K(H^k)=\mathfrak{n}(H)^k\times 
\mathfrak{k}/\mathfrak{z}_K(H)^k\rightarrow \mathbb{R},\] this giving the identification:
\[i\colon T^*\mathrm{Ad}(K)_H\longrightarrow \mathrm{Ad}(G)_H.\]

\subsection{The symplectic forms $\w_{\mathrm{std}}$ and $\w_{\mathrm{KKS}}$.}
As remarked in the introduction, we shall omit the map $i$ in the notation, so we have $\w_{\mathrm{std}},\w_{\mathrm{KKS}}$
two symplectic forms on $\mathrm{Ad}(G)_H$ whose equality we want to check.

For the purpose of fixing the sign convention, we take the standard symplectic form of the cotangent bundle $\w_{\mathrm{std}}$ 
to be
$-d\lambda$, where $\lambda=\xi dx$ and $\xi,x$ are the momentum and position coordinates, respectively.

The tangent space at $H^g\in  \mathrm{Ad}(G)_H$ is spanned by vectors of the form $[X^g,H^g]$, $X\in \gg$. The formula
\[\w_{KKS}(H^g)([X^g,H^g],[Y^g,H^g])=\langle H,[X,Y]\rangle\]
is well defined on $\mathfrak{g}/\mathfrak{z}(H)$, and gives rise to an $\mathrm{Ad}(G)$-invariant symplectic form on the orbit \cite{Ki}.

As discussed in the introduction, to prove the equality $\w_{\mathrm{std}}(H^g)=\w_{\mathrm{KKS}}(H^g)$,
we shall start by finding complementary Lagrangian subspaces for both symplectic forms.

\subsection{The vertical Lagrangian subspaces}

At  $H^g$ we  define 
\[\cL_v(H^g):= H^g+\mathfrak{n}(H)^g,\] i.e. the tangent space to the ruling. Of course,
this space is Lagrangian for $\w_{\mathrm{std}}$. It is also Lagrangian for $\w_{\mathrm{KKS}}$ \cite{ABB}. We include
the proof of this fact to illustrate the kind of arguments we will use in our computations:

Two vectors in $\cL_v(H^g)$ 
are of the form $[X^g,H^g],[Y^g,H^g]$, where $X,Y\in \mathfrak{n}(H)$. Therefore
\[\w_{KKS}(H^g)([X^g,H^g],[Y^g,H^g])=\langle H^g,[X^g,Y^g]\rangle=\langle H,[X,Y]\rangle=0,\]
where the vanishing follows because $[X,Y]\in \mathfrak{n}(H)\subset \mathfrak{n}$ and  the subspaces $\mathfrak{a}$ and
are $\mathfrak{n}$ are orthogonal w.r.t. the Killing form (this following from the orthogonality w.r.t. the inner product 
$\langle\cdot,-\theta\cdot\rangle$ used in the Iwasawa decomposition).

\subsection{The horizontal Lagrangian subspaces} At $H^g$ we define
\[\cL_h(H^g):= T_{H^g}\mathrm{Ad}(K)_H^g.\] We shall prove that  $\mathrm{Ad}(K)_H^g$ is a Lagrangian submanifold for both symplectic
forms, so in particular $\cL_h(H^g)$ is a Lagrangian subspace.

The KKS symplectic form is $\mathrm{Ad}(G)$-invariant. Therefore it suffices to prove that $\mathrm{Ad}(K)_H$
is Lagrangian w.r.t $\w_{\mathrm{KKS}}$ to conclude that for all $g\in G$ the submanifold $\mathrm{Ad}(K)_H^g$
is Lagrangian w.r.t $\w_{\mathrm{KKS}}$. We reproduce the short proof that can be found in  \cite{ABB}:

At $H^k$ two vectors tangent to $\mathrm{Ad}(K)_H$ 
are of the form $[X^k,H^k],[Y^k,H^k]$, where $X,Y\in \mathfrak{k}$. Hence
\[\w_{KKS}(H^k)([X^k,H^k],[Y^k,H^k])=\langle H^k,[X^k,Y^k]\rangle=\langle H,[X,Y]\rangle=0,\]
where the vanishing follows from $[X,Y]\in \mathfrak{k}$ and the orthogonality of $\mathfrak{a}\subset \mathfrak{p}$ and
$\mathfrak{k}$ w.r.t. the Killing form .

To describe the behavior of $\mathrm{Ad}(K)_H^g$ w.r.t. to $\w_{\mathrm{std}}$, we require a formula for the projection map
$\mathrm{pr}\colon \mathrm{Ad}(G)_H\rightarrow \mathrm{Ad}(K)_H$ defined by the bundle structure.

The Iwasawa projections
\[K\colon G\rightarrow K,  \,\,A\colon G\rightarrow A,\,\,N\colon G\rightarrow N\]
are
characterized by $x\in K(x)AN$, $x\in KA(x)N$, $x\in KAN(x)$, respectively (note that the horospherical projection
cited in the Introduction is $H=\mathrm{log}A$).

The map  $K\colon G\rightarrow K$  descends to the bundle projection
\[\mathrm{pr}\colon  \mathrm{Ad}(G)_H\cong G/Z(H)\rightarrow \mathrm{Ad}(K)_H\cong K/Z_K(H)\]
associated to the ruling: write $g=K(g)A(g)N(g)$. The affine fiber $H+\mathfrak{n}(H)$ is preserved by $A(g)N(g)$, and, 
hence:
  \[H^g=H^{K(g)A(g)N(g)}\in (H+\mathfrak{n}(H))^{K(g)}.\]

In particular (see also  \cite{DKV}, section 3),  the automorphism on $K/Z_H(K)$ induced by the action of $g$ on $\mathrm{Ad}(G)_H$ is 
\begin{equation}\label{eq:action}\Theta_g\colon K/Z_K(H)\rightarrow K/Z_K(H),\,\, k\mapsto K(gk)
 \end{equation}

To understand the bundle projection infinitesimally we also need information on the differential of the Iwasawa projections.
This information can be found for $K$ and $H$ or $A$ in \cite{DKV} (for higher order derivatives as well; see also \cite{BB}).
The result for the three projections
is presented below; the proof is omitted since it is a straightforward application of the chain rule.

\begin{lemma}\label{lem:infpro}
For any $X\in \mathfrak{g}$ and $g\in G$ we have
\begin{equation}\label{eq:decomp2}
X^{AN(g)}=K(X,g)+A(X,g)^{A(g)}+N(X,g)^{AN(g)}
\end{equation}
written as sum of vectors in $\mathfrak{k},\mathfrak{a},\mathfrak{n}$,
where $K(X,g),A(X,g),N(X,g)$ stand for the left translation to the identity on $K,A,N$ of the vector field represented
by the curves $K(g\mathrm{exp}(tX)),A(g\mathrm{exp}(tX)),N(g\mathrm{exp}(tX))$, respectively, and $AN(g)$ denotes $A(g)N(g)$.
\end{lemma}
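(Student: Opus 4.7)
The plan is to read off the identity by differentiating the family of Iwasawa decompositions of the curve $g\exp(tX)$ at $t=0$. Write $g\exp(tX) = k(t)a(t)n(t)$ with $k(t) = K(g\exp(tX))$, $a(t) = A(g\exp(tX))$, $n(t) = N(g\exp(tX))$, so that $k(0) = K(g)$, $a(0) = A(g)$, $n(0) = N(g)$, and the three vectors $K(X,g), A(X,g), N(X,g)$ of the statement are by definition $K(g)^{-1}\dot k(0)$, $A(g)^{-1}\dot a(0)$, $N(g)^{-1}\dot n(0)$ viewed as elements of $\kk, \aa, \mathfrak{n}$ via left translation.

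The first concrete step is a change of parametrization that brings all three factors through the identity: set $m(t) := K(g)^{-1}k(t)$, $p(t) := A(g)^{-1}a(t)$, $q(t) := N(g)^{-1}n(t)$, so $m(0) = p(0) = q(0) = e$ and $\dot m(0) = K(X,g)$, $\dot p(0) = A(X,g)$, $\dot q(0) = N(X,g)$. Using $g = K(g)A(g)N(g)$, the Iwasawa relation rewrites as
\[\exp(tX) = N(g)^{-1}A(g)^{-1}\, m(t)\, A(g)\, p(t)\, N(g)\, q(t).\]

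Next I would apply the Leibniz rule at $t=0$. The constant factors contribute nothing, and the three surviving summands are conjugations of $K(X,g), A(X,g), N(X,g)$ by fixed products of $A(g), N(g)$ and their inverses, read off directly from the position of each factor in the product. Applying $\mathrm{Ad}(AN(g))$ to both sides and using that $A$ is abelian, so $\mathrm{Ad}(A(g))$ fixes $A(X,g)\in\aa$, rearranges the equation into the claimed form
\[X^{AN(g)} = K(X,g) + A(X,g)^{A(g)} + N(X,g)^{AN(g)}.\]
To finish I would verify that the three summands lie in $\kk, \aa, \mathfrak{n}$ respectively: $K(X,g)\in\kk$ by definition; $A(X,g)^{A(g)} = A(X,g)\in\aa$ because $A$ centralizes $\aa$; and $N(X,g)^{AN(g)}\in\mathfrak{n}$ because $AN$ normalizes $N$. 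By uniqueness of the Iwasawa decomposition of $\gg$, these are then the $\kk$-, $\aa$- and $\mathfrak{n}$-components of $X^{AN(g)}$.

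The only genuine obstacle is bookkeeping the precise product of constants that conjugates each differentiated factor, but this is determined mechanically by the order of the factors in the rewritten product $N(g)^{-1}A(g)^{-1}m(t)A(g)p(t)N(g)q(t)$, so the whole argument is effectively forced by the chain rule, as the paper indicates.
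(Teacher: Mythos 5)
Your argument is correct: differentiating the factorization $\exp(tX)=N(g)^{-1}A(g)^{-1}m(t)A(g)p(t)N(g)q(t)$ at $t=0$ gives $X=\mathrm{Ad}(AN(g)^{-1})K(X,g)+\mathrm{Ad}(N(g)^{-1})A(X,g)+N(X,g)$, and applying $\mathrm{Ad}(AN(g))$ yields exactly the claimed identity, with the three terms landing in $\kk$, $\aa$, $\mathfrak{n}$ for the reasons you give. This is precisely the ``straightforward application of the chain rule'' that the paper invokes while omitting the proof, so your proposal matches the intended argument.
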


\begin{proof}[Proof of Proposition \ref{pro:pro}]

The submanifold $\mathrm{Ad}(K)_H^g$ is everywhere transverse to the fibers of the ruling, and hence corresponds to 
the graph of a 1-form $\alpha_{g,H}\in \Omega^1(\mathrm{Ad}(K)_H)$, which we compute
now: 

Given any $k\in K$ the point  $H^{gk}\in \mathrm{Ad}(K)_H^g$ projects
over  $H^{K(gk)}\in \mathrm{Ad}(K)_H$. The tangent space $T_{H^{K(gk)}}\mathrm{Ad}(K)_H\cong T_{K(gk)}K/Z_K(H)$ 
is spanned by vectors of the form  $L_{K(gk)^*}K(X,gk)$, where $K(X,gk)$ is the vector introduced in  Lemma
\ref{lem:infpro}.
By definition of $\a_{g,H}$ we have:
\[
\a_{g,H}(K(gk))(L_{K(gk)*}K(X,gk))=\langle (H^{gk}-H^{K(gk)})^{K(gk)^{-1}}, K(X,gk)\rangle.
\]
Because  $\mathfrak{k}$ and $\mathfrak{a}\subset \mathfrak{p}$ are $\langle \cdot,\cdot\rangle$-orthogonal, we deduce:
\[\langle (H^{gk}-H^{K(gk)})^{K(gk)^{-1}}, K(X,gk)\rangle=\langle H^{AN(gk)},K(X,gk)\rangle.
\]
By (\ref{eq:decomp2}) 
\[\langle H^{AN(gk)},K(X,gk)\rangle=\langle H^{AN(gk)},X^{AN(gk)}-A(X,gk)-N(X,gk)^{AN(gk)}\rangle.\]
Because  $\aa$ and $\mathfrak{n}$ are $\langle \cdot,\cdot\rangle$-orthogonal

\[\langle H^{AN(gk)},X^{AN(gk)}-A(X,gk)-N(X,gk)^{AN(gk)}\rangle=-\langle H^{AN(gk)},A(X,gk)\rangle.\]
Because  $H^{AN(gk)}-H\in \mathfrak{n}$,  we conclude:
\begin{equation}\label{eq:form1}
 \a_{g,H}(K(gk))(L_{K(gk)*}K(X,gk))=-\langle H,A(X,gk)\rangle.
\end{equation}

Now consider the function: 
\begin{eqnarray*}
 F_{g,H}\colon K &\longrightarrow &\R,\\
 k &\longmapsto &\langle H,H(gk)\rangle.
\end{eqnarray*}
By \cite{DKV}, Proposition 5.6, it is $Z_K(H)$-right invariant, and hence it defines a function on the real flag manifold
$K/Z_K(H)$, still denoted by $F_{g,H}$. This function is pulled back by the diffeomorphism
$\Theta_{g^{-1}}\colon K/Z_K(H)\rightarrow K/Z_K(H)$ to $\mathcal{F}_{g,H}\in C^\infty(K/Z_K(H))$. We have

\[\frac{d}{dt}\mathcal{F}_{g,H}(K(gk\exp(tX))_{\mid t=0}=
\frac{d}{dt}\langle H,H(gk\exp tX)\rangle_{\mid t=0} =\langle X^{AN(gk)},H\rangle,\]
where the last equality follows from \cite{DKV},
Corollary 5.2.

Applying  (\ref{eq:decomp2}) and the orthogonality relations we obtain 
\begin{equation}\label{eq:form2}
d \mathcal{F}ç_{g,H}(K(gk))(L_{K(gk)*}K(gk,X))=\langle H, A(X,gk)\rangle,
\end{equation}
and thus by equations (\ref{eq:form1}) and (\ref{eq:form2}) we conclude
\[\a_{g,H}=-d\mathcal{F}_{g,H},\]
as we wanted to prove.
\end{proof}

\subsection{The equality $\w_{\mathrm{KKS}}=\w_{\mathrm{std}}$.}

We just need to prove the equality at any point $H^g$  on pairs of vectors $[X^g,H^g], [Y^g,H^g]$,
where $X\in \mathfrak{k}$ and $Y\in \mathfrak{n}(H)$.

By definition of the KKS form: \[\w_{KKS}(H^g)([X^g,H^g],[Y^g,H^g])=\langle H,[X,Y]\rangle.\]

As for the standard form:
\[ \w_{\mathrm{std}}(H^g)([X^g,H^g],[Y^g,H^g])=\langle [Y^g,H^g]^{K(g)^{-1}},K(X,g)\rangle=\langle [Y^{AN(g)},H^{AN(g)}],K(X,g)\rangle.\]
By equation (\ref{eq:decomp2})
\[\langle [Y^{AN(g)},H^{AN(g)}],K(X,g)\rangle=\langle [Y,H],X\rangle -\langle [Y^{AN(g)},H^{AN(g)}], A(X,g)^{A(g)}+N(X,g)^{AN(g)}\rangle,\]
which equals $\langle [Y,H],X\rangle=\langle H,[X,Y]\rangle$ since in the second summand the first entry belongs to $\mathfrak{n}$ and the second to
$\mathfrak{a}+\mathfrak{n}$. 

\textsl{Acknowledgements:} I would like to thank the referee for pointing out related results which significantly improved the paper.

\end{document}